\newtheorem{theorem}{Theorem}
\newtheorem{proposition}{Proposition}
\newenvironment{proof}[1][Proof]{\smallbreak\noindent\textbf{#1.} }{\hfill\ \rule{0.5em}{0.5em}\par\smallbreak}
\def\Res{\hbox{Res}}
\newcommand{\Img}[2]{\includegraphics[width=#1truein]{#2}}
\title{Longest Run of Equal Parts in a Random Integer Composition}
\author{Ayla Gafni\footnote{This work was done during a summer internship at Algorithms Project, INRIA-Rocquencourt, F78153 Le Chesnay, France, May-July, 2009, under the direction of Philippe Flajolet.  Author's permanent email address is ayla.r.gafni@vanderbilt.edu.}
}
\date{July 31, 2009}                                           
\begin{document}
\maketitle
\begin{quote} \small
This note examines a problem in enumerative and asymptotic combinatorics involving the classical structure of integer compositions.  What is sought is an analysis on average and in distribution of the length of the longest run of consecutive equal parts in a composition of size $n$.  The problem was recently posed by Herbert Wilf (see arXiv: 0906.5196).  
\end{quote}
\normalsize
\vspace*{0.5in}

A \emph{composition} of an integer $n$ is a sequence $(x_1, \ldots, x_m)$ of positive integers such that 
$$n=x_1 + \cdots + x_m, \quad \text{and} \quad  x_i\ge 1.$$  
The $x_i$ are called the \emph{parts} and $n$ is the \emph{size} of the composition.  We wish to know the \emph{length of the longest run of equal parts} (which we denote by the random variable $L$) in a random composition of size $n$.  For instance, the composition 
$$3, 2, 1,  4, 4, 4, 4, 4, 7, 3, 5, 5, 4, 2,$$
has $L = 5$.  A composition with $L=1$ is known as a Carlitz composition.  The characteristics of Carlitz compositions and their generating function $C^{\left<1\right>}(z)$ (see Proposition~\ref{prop: gf}) are studied in great detail in \cite{Knopfmacher98carlitz, Louchard02probabilisticanalysis}.  The solution to the longest run problem can be broken down into four main sections.  In the first section, we find a family of generating functions for integer compositions that keeps track of the longest run of equal parts.  In the second section, we analyze the generating functions using singularity analysis to find an asymptotic estimate of the number of compositions of size $n$ with no run of length $k$.  In the third, we use that estimate to describe the probability distribution of the random variable $L$, and in the fourth, we calculate the mean and variance of the distribution.  The analysis here has some similarities to the analytic treatment of compositions in \cite{Archibald06distinctvalues, Knopfmacher98carlitz, Louchard02probabilisticanalysis}, and the methods and notation used in this note are detailed in the book  \emph{Analytic Combinatorics} by Flajolet and Sedgewick \cite{Flajolet09analyticCombinatorics}.  
This note was motivated by a question of Wilf, posed at the Analysis of Algorithms 09 Conference (Frejus, June 2009); see \cite{Wilf09longestrunlength}.  

The author would like to thank Herbert Wilf for suggesting this problem and Philippe Flajolet for his direction and support throughout this project.

\section{Enumerative Aspects of Compositions}

The enumeration of integer compositions is easily solved using basic combinatorics.  We can create a graphical model of a composition by representing the integers in unary using small discs (``$\bullet$'') and drawing bars between some of the balls.  The following is an example using the composition $2+3+1+1+3=10$: 
$$\bullet\bullet|\bullet\bullet\bullet|\bullet|\bullet|\bullet\bullet\bullet.$$ 
Using this ``balls-and-bars'' model, we see that the number of compositions of the integer $n$ is
$$C_n  = 2^{n-1},$$
since a composition can be viewed as the placement of separation bars at a subset of the $n-1$ spaces between the balls. 

We can also find the enumeration of compositions with the symbolic method \cite[p.~40]{Flajolet09analyticCombinatorics}.  If the integers are represented in unary, then the combinatorial class of positive integers ($\cls I $) can be thought of as a sequence of atoms ($\cls Z$) so that
$$\cls I  = \Seq[\ge 1]{\clsAtom} \quad \Longrightarrow \quad I(z) = \frac{z}{1-z}.$$
Since an integer composition is simply a sequence of positive integers, we can easily derive the generating function for the class \cls C of compositions from the specification
$$\cls C = \Seq{\cls I} \quad \Longrightarrow \quad C(z) = \frac{1}{1-I(z)} = \frac{1}{1-\frac{z}{1-z}} = \frac{1-z}{1-2z}.$$
Throughout this note, we let $[z^n] f(z)$ be the coefficient of $z^n$ in the expansion of $f(z)$ at $0$:
$$[z^n] \sum_n f_n z^n = f_n.$$
We find that our result using the symbolic method is consistent with the above combinatorial argument, since
$$[z^n]C(z) = [z^n]\frac{1}{1-2z} - [z^n]\frac{z}{1-2z} = 2^n - 2^{n-1} = 2^{n-1}.$$

Now that we have a generating function for \emph{all} integer compositions, we need another generating function for compositions, which keeps track of the longest run of equal parts.  We begin by examining Smirnov words, i.e., words over an $m$-ary alphabet such that no letter occurs twice in a row.  Words over the $m$-ary alphabet $\{a_1,\ldots, a_m\}$ can be represented by the multivariate generating function
$$W(x_1,\ldots,x_m) = \frac{1}{1-(x_1+\cdots + x_m)},$$
where $x_j$ marks the number of times the letter $a_j$ occurs in a word.  That is, the expression $[x_1^{n_1},\ldots, x_m^{n_m}]W(x_1,\ldots,x_m)$ denotes the number of words in which the letter $a_1$ occurs $n_1$ times, $a_2$ occurs $n_2$ times, and so on. 

Similarly, let $S(y_1,\ldots, y_m)$ be the generating function for Smirnov words, where $y_j$ marks the number of times the letter $a_j$ occurs in a word.  Now, given a Smirnov word, one can obtain \emph{any} word by replacing $a_j$ with a nonempty sequence of $a_j$ (i.e., $a_j\times \Seq{a_j}$).  In terms of generating functions, this translates to
$$W(x_1, \ldots, x_m) = S\left(\frac{x_1}{1-x_1}, \ldots , \frac{x_m}{1-x_m}\right).$$
We use this to find the generating function for Smirnov words in terms of the generating function for all words:
$$S(y_1, \ldots, y_m) = W\left(\frac{y_1}{1+y_1}, \ldots , \frac{y_m}{1+y_m}\right),$$
so that we have
$$S(y_1, \ldots, y_m) = \left(1-\sum_{j=1}^m\frac{y_j}{1+y_j}\right)^{-1}.$$

We would like to generalize $S(y_1, \ldots, y_m) $ to the generating function $S^{\left<k\right>}(y_1, \ldots, y_m)$ for words over an $m$-ary alphabet such that no letter occurs $k$ times in a row.  We can obtain this via the substitution
$$y_j \rightarrow \sum_{i=1}^{k-1}y_j^i = \frac{y_j-y_j^k}{1-y_j},$$
which yields
$$S^{\left<k\right>}(y_1, \ldots, y_m) = \left(1-\sum_{j=1}^m \frac{y_j-y_j^k}{1-y_j^k}\right)^{-1}.$$

Integer compositions are sequences of positive integers, and hence can be thought of as ``words'' over an infinite alphabet where the ``letters'' are the positive integers.  So, by letting $m$ tend to infinity and replacing $y_j$ with $z^j$, we obtain the generating function for integer compositions with no part appearing $k$ times in a row.

\begin{proposition} The generating function for integer compositions with no part appearing $k$ times consecutively is given by
$$C^{\left<k\right>}(z) = \left(1-\sum_{j=1}^\infty z^j\frac{1-z^{j(k-1)}}{1-z^{jk}}\right)^{-1}.$$
\label{prop: gf}
\end{proposition}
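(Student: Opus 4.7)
The plan is to deduce Proposition~\ref{prop: gf} directly from the formula
$$S^{\left<k\right>}(y_1,\ldots,y_m)=\left(1-\sum_{j=1}^{m}\frac{y_j-y_j^{k}}{1-y_j^{k}}\right)^{-1}$$
already established in the excerpt for words over an $m$-ary alphabet avoiding $k$ consecutive equal letters. The idea is to regard an integer composition $(x_1,\ldots,x_r)$ as a word over the infinite alphabet $\{1,2,3,\ldots\}$, the letter $j$ being the part equal to $j$. Under this identification, a composition has no run of $k$ equal parts if and only if the corresponding word has no letter appearing $k$ times in a row; and the size $n=x_1+\cdots+x_r$ of the composition equals $\sum_j j\cdot n_j$, where $n_j$ counts the occurrences of the letter $j$.

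First I would make this identification rigorous by noting that the $m$-ary Smirnov-type formula is multivariate: the monomial $y_1^{n_1}\cdots y_m^{n_m}$ records the letter statistics. Substituting $y_j=z^j$ and summing over $j\ge 1$ converts each letter monomial into $z^{jn_j}$, so the total weight of a word becomes $z^{\sum j n_j}$, exactly $z^{\text{size}}$. Next I would let $m\to\infty$. As a formal power series in $z$, the coefficient of $z^n$ in the sum $\sum_{j\ge 1}\frac{z^j-z^{jk}}{1-z^{jk}}$ is already stable once $m\ge n$, since every term with $j>n$ contributes only to powers $z^{>n}$; the same truncation argument legitimizes passing to the limit inside the closed-form expression $(1-\cdot)^{-1}$, yielding a well-defined element of $\mathbb{Z}[[z]]$.

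Finally I would perform the elementary rewriting
$$\frac{z^j-z^{jk}}{1-z^{jk}}=z^{j}\cdot\frac{1-z^{j(k-1)}}{1-z^{jk}},$$
obtained by factoring $z^j$ from the numerator, which delivers the stated formula for $C^{\left<k\right>}(z)$.

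The only mild obstacle is verifying that the $m\to\infty$ passage is harmless; this is not an analytic convergence question but a coefficient-wise one, handled by the truncation remark above. Everything else is a substitution in an already-established symbolic identity, and I would not expect any genuine difficulty. As a quick sanity check, one may verify the case $k=2$ (Carlitz compositions), where $\frac{1-z^{j}}{1-z^{2j}}=\frac{1}{1+z^{j}}$ and the formula reduces to $C^{\left<1\right>}(z)=\bigl(1-\sum_{j\ge 1}\frac{z^j}{1+z^j}\bigr)^{-1}$, matching the expression alluded to in the introduction.
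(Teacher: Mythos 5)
Your proposal is correct and follows essentially the same route as the paper: the paper likewise obtains the result by substituting $y_j \to z^j$ in the Smirnov-word formula $S^{\left<k\right>}$ and letting $m\to\infty$, with the same factorization $z^j - z^{jk} = z^j(1-z^{j(k-1)})$ giving the stated form. Your explicit coefficient-wise justification of the $m\to\infty$ passage is a small addition in rigor over the paper's one-line remark, but not a different method.
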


We have now finished the first step of the solution, which is to find a family of generating functions for integer compositions, indexed by their longest run of equal parts.  Our result is essentially equivalent to one given by Wilf (Theorem 3 of \cite{Wilf09longestrunlength}).  Wilf obtains it by means of the correlation theory of Guibas-Odlyzko.  For similar problems, Knopfmacher and Prodinger employ the technique of ``adding a slice'' \cite{Knopfmacher98carlitz}.  Our derivation above, based on Smirnov words, has the advantage of great versatility, and it follows \cite[p.~205]{Flajolet09analyticCombinatorics}.

\section{Singularity Analysis}

We move on to the second step of the solution, which is to view $C^{\left<k\right>}(z)$ as a function in the complex plane and perform singularity analysis.  This step is much more technical than the first. 
The important fact used here is that information about the function's Taylor coefficients is contained in the asymptotic behavior of the function at its singularities.  In fact, most of the relevant information is contained in the dominant singularity.\footnote{A dominant singularity is one of smallest modulus.}  Thus, our next step is to find the dominant singularity of the function $C^{\left<k\right>}(z)$ and show that it is an \emph{isolated pole}.  This is established in the following two propositions.  The analysis here is reminiscent of that of longest runs in binary words in \cite[p.~308]{Flajolet09analyticCombinatorics}.  However, the case of integer compositions is somewhat more complicated because we do not have \emph{rational} generating functions, and hence encounter additional difficulties in finding the dominant singularity of $C^{\left<k\right>}(z)$.

\begin{proposition} The dominant singularity of the generating function $C^{\left<k\right>}(z)$ satisfies
$$\rho_k = \frac{1}{2}\left(1 + 2^{-k-2} + O(k2^{-2k})\right), \qquad k\rightarrow \infty.$$
\label{prop: rho}
\end{proposition}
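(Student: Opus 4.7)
The plan is to characterize $\rho_k$ as the unique solution in $(0,1)$ of the equation $\phi_k(z)=1$, where
$$\phi_k(z):=\sum_{j\ge 1}z^j\,\frac{1-z^{j(k-1)}}{1-z^{jk}}$$
is the expression inside the parentheses of Proposition~\ref{prop: gf}. I would first verify that $\phi_k$ is continuous and strictly increasing on $(0,1)$ with $\phi_k(0^+)=0$ and $\phi_k(1^-)=\infty$, yielding existence and uniqueness of a real root in $(0,1)$. Since the Taylor coefficients of $C^{\langle k\rangle}$ are non-negative, a Pringsheim-type argument identifies this real root as a dominant singularity, so it suffices to estimate it asymptotically.

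Next, I would isolate the main contribution from an exponentially small remainder by rearranging
$$\phi_k(z)=\frac{z}{1-z}-R_k(z), \qquad R_k(z):=\sum_{j\ge 1}\frac{z^{jk}(1-z^j)}{1-z^{jk}}.$$
The unperturbed equation $z/(1-z)=1$ has root $z=1/2$, which suggests searching for $\rho_k=\tfrac12+\epsilon_k$ with $\epsilon_k$ small. Evaluating at $z=1/2$: the $j=1$ term of $R_k$ contributes $2^{-k-1}(1+O(2^{-k}))$, and the tail $\sum_{j\ge 2}$ is $O(2^{-2k})$ by a geometric estimate. Hence $\phi_k(1/2)=1-2^{-k-1}+O(2^{-2k})$, so indeed $\rho_k>1/2$.

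The final step is a one-term Taylor expansion of $\phi_k$ at $1/2$. One has $\frac{d}{dz}\bigl(\frac{z}{1-z}\bigr)\big|_{z=1/2}=4$, while termwise differentiation yields $R_k'(1/2)=O(k\,2^{-k})$, the factor $k$ arising from the chain rule applied to $z^{jk}$ and being dominated by the $j=1$ contribution. Solving
$$1=\phi_k\bigl(\tfrac12+\epsilon_k\bigr)=1-2^{-k-1}+O(2^{-2k})+\bigl(4+O(k\,2^{-k})\bigr)\epsilon_k+O(\epsilon_k^2)$$
for $\epsilon_k$ gives $\epsilon_k=2^{-k-3}+O(k\,2^{-2k})$. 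Factoring out $\tfrac12$ delivers the claimed expansion
$\rho_k=\tfrac12\bigl(1+2^{-k-2}+O(k\,2^{-2k})\bigr)$.

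The main obstacle is uniform control of $R_k$ and $R_k'$ throughout a neighborhood of $1/2$ of radius comparable to $2^{-k}$. Specifically, one must ensure that the tail $\sum_{j\ge 2}$ does not swamp the leading constant $1$ in front of $2^{-k-1}$, and that $R_k'$ is genuinely only of order $k\,2^{-k}$ rather than anything worse; this last point is what produces the factor $k$ in the error term $O(k\,2^{-2k})$. Each of these bounds follows from routine geometric-series estimates, but carrying the book-keeping honestly for the derivative term is where the bulk of the technical work lies.
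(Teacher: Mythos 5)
Your proposal is correct and follows essentially the same route as the paper: the same decomposition of the denominator into $\frac{z}{1-z}$ minus an exponentially small correction $g(z)=R_k(z)$, the same Pringsheim localization of a real root near $\tfrac12$, and the same evaluation $g(1/2)=2^{-k-1}+O(2^{-2k})$ driving the correction $2^{-k-3}$. The only difference is cosmetic --- you solve by a one-step Taylor/Newton expansion of $\phi_k$ at $\tfrac12$, whereas the paper runs one step of the fixed-point iteration $z\mapsto\frac{1+g(z)}{2+g(z)}$; your version makes the origin of the factor $k$ in the error term (namely $R_k'=O(k2^{-k})$ near $\tfrac12$) somewhat more explicit than the paper's appeal to ``further iterations.''
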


\begin{proof}
We consider the denominator of $C^{\left<k\right>}(z)$:
\begin{align*} 1-\sum_{j=1}^\infty z^j\frac{1-z^{j(k-1)}}{1-z^{jk}} & 
= 1 - \sum_{j=1}^\infty z^j + \sum_{j=1}^\infty z^j - \sum_{j=1}^\infty z^j\frac{1-z^{j(k-1)}}{1-z^{jk}} \\ &
= 1 - \sum_{j=1}^\infty z^j + \sum_{j=1}^\infty z^{jk} \frac{1-z^{j}}{1-z^{jk}}.
\end{align*}
Let $h(z) = \sum_{j \ge 1} z^j$ and $g(z) =  \sum_{j\ge 1} z^{jk} \frac{1-z^{j}}{1-z^{jk}}$.  Let $\rho_k$ be the dominant positive singularity of $C^{\left<k\right>}(z)$, whose existence is guaranteed by Pringsheim's Theorem \cite[p.~240]{Flajolet09analyticCombinatorics}.  The value $\rho_k$ is the solution to the equation 
$$h(z) - g(z) = 1.$$

Since $h(z) = z/(1-z)$, we have that $h(1/2) = 1$.  Also, $g(z)$ is positive when $z$ is real between $0$ and $1$.  Therefore, $h(1/2) - g(1/2) <1$.  Simple calculations show that $h(0.6) - g(0.6) >1$ for $k \ge 2$.  Hence we have that 
$$\frac{1}{2} < \rho_k < \frac{3}{5}.$$
To get a closer estimate for $\rho_k$, we consider that $\rho_k$ is the fixed point of the equation
\begin{equation}
z = h^{-1}(1+g(z)) = \frac{1+g(z)}{2+g(z)},
\label{eq: fixed point}
\end{equation}
and use iterative methods to estimate the fixed point.  We let
$$z_0=\frac{1}{2} \qquad \text{and} \qquad z_{i+1} = \frac{1+g(z_i)}{2+g(z_i)}.$$
Since $1/2<\rho_k$, we have that $z_1<\rho_k$, and by induction $z_{i-1} < z_i<\rho_k$ for all $i>0$.  Since $\rho_k$ is the unique fixed point of equation (\ref{eq: fixed point}), the sequence $\{z_i\}_{i=0}^\infty$ must converge to $\rho_k$.

Before computing $z_1$, it is helpful to simplify our definition of $g(z)$:
$$g(z) = \sum_{j\ge 1} z^{jk} \frac{1-z^{j}}{1-z^{jk}} = z^k\frac{1-z}{1-z^k} + O(z^{2k})$$
Now, we have that
\begin{align*}
z_1 & = \frac{1+g(1/2)}{2+g(1/2)} = \frac{1 + 2^{-k-1}(1-2^{-k})^{-1} + O(2^{-2k})}{2 + 2^{-k-1}(1-2^{-k})^{-1} + O(2^{-2k})} \\ &
= \frac{1}{2} + 2^{-k-3} + O(2^{-2k})
\end{align*}
Further iterations will increase our estimate by adding terms which are  $O(k2^{-2k})$, so we have that $\rho_k - z_1 = O(k2^{-2k}).$  Therefore,
$$\rho_k =z_1 + O(k2^{-2k}) =  \frac{1}{2}\left(1 + 2^{-k-2} + O(k2^{-2k})\right),$$
as desired.
\end{proof}

\begin{proposition}
For $k \ge 4$, the value $\rho_k$ is the only singularity of the function $C^{\left<k\right>}(z)$
in the domain $|z|<3/5$.
\end{proposition}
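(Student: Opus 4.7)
\medskip
\noindent\textbf{Proof plan.} The function $C^{\left<k\right>}(z)=(1-h(z)+g(z))^{-1}$ can fail to be analytic in $|z|<3/5$ only at zeros of the denominator $D(z):=1-h(z)+g(z)$, because $h(z)=z/(1-z)$ has its sole singularity at $z=1$ and each term of the series defining $g(z)$ is $O(z^{jk})$ at the origin with poles only on the unit circle, so the series converges uniformly on compact subsets of $|z|<1$. The strategy is to apply Rouch\'e's theorem on the circle $|z|=3/5$ with the natural splitting $D(z)=(1-h(z))+g(z)$: the ``main'' term $1-h(z)=(1-2z)/(1-z)$ is meromorphic in the open unit disk, has the unique zero $z=1/2$ inside $|z|<3/5$, and has no poles there.

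If the Rouch\'e inequality $|g(z)|<|1-h(z)|$ can be established everywhere on $|z|=3/5$, the meromorphic version of Rouch\'e's theorem will force $D(z)$ to have the same number of zeros in $|z|<3/5$ as $1-h(z)$, namely exactly one. Since $\rho_k<3/5$ by the preceding proposition, this unique zero must be $\rho_k$ itself, which finishes the proof.

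To lower-bound $|1-h(z)|$ on $|z|=3/5$, writing $z=(3/5)e^{i\theta}$ a direct computation gives
$$|1-h(z)|^2=\frac{|1-2z|^2}{|1-z|^2}=\frac{61-60\cos\theta}{34-30\cos\theta},$$
a rational function of $\cos\theta$ whose derivative has constant sign, so its minimum on the circle is attained at $\theta=0$ and equals $1/4$; hence $|1-h(z)|\ge 1/2$. For the upper bound on $|g(z)|$, I would use $|1-z^j|\le 1+r^j$ and $|1-z^{jk}|\ge 1-r^{jk}$ with $r=3/5$ to obtain
$$|g(z)|\le \sum_{j=1}^{\infty}\frac{r^{jk}(1+r^j)}{1-r^{jk}},$$
isolate the dominant $j=1$ term $r^k(1+r)/(1-r^k)$, and geometrically dominate the tail $\sum_{j\ge 2}$ by using $r^{jk}\le r^{2k}\cdot r^{(j-2)k}$. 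A short calculation shows the resulting upper bound is already below $1/2$ at $k=4$ (numerically about $0.27$) and decays rapidly as $k$ grows.

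The main obstacle is the numerical tightness of the Rouch\'e comparison at the borderline case $k=4$: the $j=1$ contribution to $g$ is not negligible compared with the lower bound $1/2$, so one must separate it from the tail and avoid crude majorations that would lose the needed margin. This is precisely what forces the hypothesis $k\ge 4$; for smaller $k$ the same choice of contour $|z|=3/5$ no longer yields a clean Rouch\'e comparison against $1-h$.
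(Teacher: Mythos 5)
Your proposal is correct and follows essentially the same route as the paper: Rouch\'e's theorem on the circle $|z|=3/5$ with the splitting $D=(1-h)+g$, an upper bound $|g(z)|\approx 0.27$ for $k\ge 4$, and a lower bound $|1-h(z)|\ge 1/2$, from which the unique zero of $D$ in the disk is identified with $\rho_k$. The only cosmetic difference is that you obtain the lower bound on $|1-h|$ by a direct computation in $\cos\theta$, whereas the paper uses the fact that the linear fractional transformation $z\mapsto z/(1-z)$ sends the circle $|z|=3/5$ to another circle; both yield the same constant $1/2$.
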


\begin{proof}
Let 
$$f(z) = 1-h(z), \quad \text{where  } \ h(z) = \frac{z}{1-z},$$ and let 
$$ g(z) =  \sum_{j\ge 1} z^{jk} \frac{1-z^{j}}{1-z^{jk}}.$$  
Notice that 
$$C^{\left<k\right>}(z) = \frac{1}{1-h(z) + g(z)} = \frac{1}{f(z) + g(z)},$$ 
Thus, once we have shown that $|g(z)| \le |f(z)|$ for all $z$ on the circle $|z| = 3/5$, then by Rouch\'e's Theorem \cite[p.~270]{Flajolet09analyticCombinatorics}, $f(z)$ and $f(z) + g(z)$ will have the same number of zeros in the domain $|z| < 3/5$.  The proposition will follow immediately, since there is only one root of $f(z)$ in that domain, which is $z=1/2$.  

To show that $|g(z)| \le |f(z)|$ for all $z$ on the circle $|z| = 3/5$, we  first bound $|g(z)|$ from above and then bound $|f(z)|$ from below.  On this circle, we have that
$$\left|\frac{1}{1-z^{jk}}\right|  \le \frac{1}{1-|z^{jk}|} \le \frac{1}{1-0.6^{k}} \qquad\hbox{and}\qquad
|1-z^j| \le 1 + |z^j| \le 1.6,$$
which implies
$$|g(z)| \le \sum_{j\ge 1} |z^{jk}| \frac{|1-z^{j}|}{|1-z^{jk}|}
\le \frac{1.6}{1-0.6^{k}}\sum_{j\ge 1}|z^{jk}|
\le \frac{(1.6)(0.6)^k}{(1-0.6^{k})^2}.$$
For $k\ge4$, this becomes
$$|g(z)| \le \frac{(1.6)(0.6)^4}{(1-0.6^{4})^2} \approx 0.2737.$$

We now need to bound $|1-h(z)|$ from below on the circle $|z|=3/5$.  That is, we need to find the distance between the point $1$ and the image of the circle $|z|=3/5$ under the linear fractional transformation
$$\phi(z)  = \frac{z}{1-z}.$$
The image of a circle under a linear fractional transformation is again a circle.  Thus, since $\phi(-0.6)=-0.375$ and $\phi(0.6) = 1.5$, we have that the image of $\{|z|=3/5\}$ under $\phi$ is the circle $|z-0.5625| = 0.9375$, which comes closest to the point $1$ on the positive real axis.  Therefore,
$$|f(z)| = |1-h(z)| \ge |1-1.5| = 0.5.$$

Therefore, for $|z|=3/5$,  
$$|g(z)| \le 0.274 < 0.5 \le |f(z)|,$$
so Rouch\'e's Theorem can be applied.  Thus there is only one root of  $f(z) + g(z)$ in the given domain, and that root must be $\rho_k$.
\end{proof}

Now that we have found and isolated the dominant pole, $\rho_k$, we use the Residue Theorem to extract information about the coefficients $C_n^{\left<k\right>}$ from the behavior of $C^{\left<k\right>}(z)$ at $\rho_k$.  This is our main approximation, expressed primarily as $n\rightarrow\infty$, but also allowing for $k$ to get large.

\begin{proposition}
The number of compositions of $n$ with no run of $k$ equal parts satisfies 
$$C_n^{\left<k\right>} = \rho_k^{-n-1}(1-\rho_k)^2\left(1+\epsilon(k)\right) + O\left(\left(\frac{5}{3}\right)^n\right), \qquad n\rightarrow\infty,$$
uniformly with respect to $k \ge 4$, where $\epsilon(k) = O(k2^{-k})$ as $k\rightarrow\infty$.
\label{prop: number of comp}
\end{proposition}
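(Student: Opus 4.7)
The plan is to apply Cauchy's coefficient formula and deform a small initial contour out to the circle $|z|=3/5$, picking up the residue at the (simple, isolated) pole $\rho_k$ supplied by the previous proposition, and then controlling the integral over the new contour and the residue separately.

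Step 1: For $r<\rho_k$, start from
$$C_n^{\left<k\right>} = \frac{1}{2\pi i}\oint_{|z|=r}\frac{C^{\left<k\right>}(z)}{z^{n+1}}\,dz.$$
Since $\rho_k$ is the only singularity inside $|z|<3/5$, deforming to $|z|=3/5$ gives
$$C_n^{\left<k\right>} = -\Res_{z=\rho_k}\frac{C^{\left<k\right>}(z)}{z^{n+1}} + \frac{1}{2\pi i}\oint_{|z|=3/5}\frac{C^{\left<k\right>}(z)}{z^{n+1}}\,dz.$$

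Step 2: Bound the remainder integral. On $|z|=3/5$, the Rouch\'e argument of the previous proposition shows that $|f(z)+g(z)|\ge |f(z)|-|g(z)|\ge 0.5-0.274>0$ uniformly in $k\ge 4$, so $|C^{\left<k\right>}(z)|$ is bounded by an absolute constant. Hence the integral is $O((3/5)^{-n})=O((5/3)^n)$, uniformly in $k\ge 4$.

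Step 3: Evaluate the residue at the simple pole $\rho_k$. Writing $C^{\left<k\right>}(z) = 1/(f(z)+g(z))$, we have
$$-\Res_{z=\rho_k}\frac{C^{\left<k\right>}(z)}{z^{n+1}} = -\frac{1}{\rho_k^{n+1}\,(f'(\rho_k)+g'(\rho_k))}.$$
Now $f'(z)=-1/(1-z)^2$, so $f'(\rho_k)=-1/(1-\rho_k)^2$. From the series $g(z)=z^k(1-z)/(1-z^k)+O(z^{2k})$ I would show by termwise differentiation that $g'(\rho_k)=O(k\,2^{-k})$, again uniformly in $k\ge 4$; this is where I need to be a little careful about bounding the tail of $\sum_{j\ge 1}$ uniformly. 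Factoring out $f'(\rho_k)$,
$$f'(\rho_k)+g'(\rho_k) = -\frac{1}{(1-\rho_k)^2}\bigl(1 - (1-\rho_k)^2 g'(\rho_k)\bigr) = -\frac{1}{(1-\rho_k)^2}\bigl(1+O(k\,2^{-k})\bigr),$$
whence
$$-\Res_{z=\rho_k}\frac{C^{\left<k\right>}(z)}{z^{n+1}} = \rho_k^{-n-1}(1-\rho_k)^2\bigl(1+\epsilon(k)\bigr),\qquad \epsilon(k)=O(k\,2^{-k}).$$
Combining with Step 2 yields the claimed asymptotic.

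The main obstacle is ensuring uniformity in $k$ throughout: the lower bound on $|f+g|$ on $|z|=3/5$ must not degrade with $k$ (which the proof of the previous proposition already guarantees for $k\ge 4$), and the estimate $g'(\rho_k)=O(k\,2^{-k})$ must hold with an absolute implied constant, requiring a careful tail bound on the differentiated series rather than just an expansion at $z=1/2$.
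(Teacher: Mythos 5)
Your proposal is correct and follows essentially the same route as the paper: the paper applies the residue theorem directly on $|z|=3/5$ (the residue at $0$ being $C_n^{\left<k\right>}$), bounds the contour integral by $O((5/3)^n)$ using the same Rouch\'e-derived lower bound $|f(z)+g(z)|>0.22$, and evaluates the residue at the simple pole via $-\rho_k^{-n-1}/D_k'(\rho_k)$ with $D_k'=f'+g'$ and $g'(\rho_k)=O(k2^{-k})$. The only cosmetic difference is that you phrase the first step as a contour deformation from a small circle rather than as a direct application of the residue theorem on the larger circle.
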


\begin{proof}
The Residue Theorem gives
\begin{align}
\frac{1}{2\pi i} \int_{|z|=3/5} \frac{C^{\left<k\right>}(z)}{z^{n+1}} dz &
= \Res\left(\frac{C^{\left<k\right>}(z)}{z^{n+1}} ; z = 0 \right) + \Res\left(\frac{C^{\left<k\right>}(z)}{z^{n+1}} ; z = \rho_k \right) \nonumber \\ &
= C_n^{\left<k\right>} + \Res\left(\frac{C^{\left<k\right>}(z)}{z^{n+1}} ; z = \rho_k \right). \label{eq: res thm}
\end{align}
On the other hand, previous arguments have shown
$$ |1-h(z) + g(z)| \ge |1-h(z)| - |g(z)| \ge 0.5 - 0.2737 > 0.22.$$
Hence
\begin{equation}
\frac{1}{2\pi i} \int_{|z|=3/5} \frac{C^{\left<k\right>}(z)}{z^{n+1}} dz  = \frac{1}{2\pi i} \int_{|z|=3/5} \frac{1}{1-h(z) + g(z)}\frac{dz}{z^{n+1}} <  \frac{5}{2\pi i}\left(\frac{3}{5}\right)^{-n-1}. \label{eq: error bound}
\end{equation}
Combining equations (\ref{eq: res thm}) and (\ref{eq: error bound}), we obtain
$$C_n^{\left<k\right>} = - \Res\left(\frac{C^{\left<k\right>}(z)}{z^{n+1}} ; z = \rho_k \right) +  O\left(\left(\frac{5}{3}\right)^n\right).$$

We now need to find an estimate for the residue $R_{n,k} := - \Res\left(\frac{C^{\left<k\right>}(z)}{z^{n+1}} ; z = \rho_k \right)$.  We have
$$\frac{C^{\left<k\right>}(z)}{z^{n+1}} = \frac{z^{-n-1}}{D_k(z)}, \quad \text{where} \quad 
D_k(z) = 1 - \frac{z}{1-z} + \sum_{j=1}^\infty z^{jk} \frac{1-z^{j}}{1-z^{jk}}.$$  
Hence 
$$R_{n,k} = - \frac{\rho_k^{-n-1}}{D_{k}'(\rho_k)}.$$
A straightforward calculation gives
$$D_{k}'(z) = \frac{d}{dz} D_k(z) = -\frac{1}{(1-z)^2} + (k+1)z^k + kz^{k-1} + O(kz^{2k}),$$
uniformly for $z$ near $\rho_k$ as $k\rightarrow\infty$. 
So we have
\begin{align*}
\frac{1}{D_{k}'(\rho_k)} & 
= -\frac{1}{ \frac{1}{(1-\rho_k)^2} - (k+1)\rho_k^k - k\rho_k^{k-1} + O(k\rho_k^{2k})} \\ &
= -(1-\rho_k)^2 - (k+1)\rho_k^k - k\rho_k^{k-1} + O(k^2 \rho_k^{2k}).
\end{align*}
We thus obtain
$$R_{n,k}  =  \rho_k^{-n-1}(1-\rho_k)^2\left(1+O(k2^{-k})\right),$$
from which the result follows immediately.
\end{proof}

This concludes the second step of the solution.  In the remainder of the note, we find the asymptotic form of the probability distribution of the random variable $L$ and estimate its mean and variance.

\section{Analysis of the Probability Distribution}

In this section we exploit the main approximations of Propositions \ref{prop: rho} and \ref{prop: number of comp} to describe the probability distribution of the random variable $L$.  This development is in the scale of $\lg n$, which is anticipated on probabilistic grounds (see Conclusion).  Our main goal here is to show a double exponential form for the bulk of the distribution.

\begin{theorem}
Let $L$ be the random variable measuring the longest run of equal parts in a random integer composition of size $n$.  Then $L$ satisfies
\footnote{In this note, $\mathbb{P}_n$ refers to the probabilistic model where all compositions of size $n$ are taken equally likely.}
$$\mathbb{P}_n(L < k) = e^{-n/2^{k+2}} \left(1+O\left(\frac{\log n}{\sqrt{n}}\right)\right),$$
uniformly for $k\in\mathbb{Z}$ such that $\frac{3}{4}\lg n \le k \le 2\lg n$.  Equivalently
\footnote{Here $\lg n \equiv \log_2 n$ and $\{\alpha\}$ represents  the fractional part of $\alpha$ (i.e., $\{\alpha\} = \alpha - \lfloor \alpha \rfloor$).}
$$\mathbb{P}_n(L< \lfloor \lg n \rfloor + h) = e^{-\omega(n)2^{-h-2}}\left(1+O\left(\frac{\log n}{\sqrt{n}}\right)\right), \qquad \omega(n) = 2^{\{\lg n\}},$$
uniformly for $h\in\mathbb{Z}$ such that $-\frac{3}{4}\lg n + \{ \lg n \} \le h \le \lg n + \{ \lg n \}$.
\label{thm: prob}
\end{theorem}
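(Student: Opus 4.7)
The plan is to start from the identity $\mathbb{P}_n(L<k) = C_n^{\langle k\rangle}/2^{n-1}$, since the total number of compositions of $n$ is $2^{n-1}$, and then plug in the asymptotic formula furnished by Proposition~\ref{prop: number of comp} together with the refined estimate of $\rho_k$ from Proposition~\ref{prop: rho}. The $O((5/3)^n)$ term in Proposition~\ref{prop: number of comp}, divided by $2^{n-1}$, gives a contribution of order $(5/6)^n$, which is exponentially small and can be absorbed into the stated error. So essentially the task reduces to simplifying
$$\frac{\rho_k^{-n-1}(1-\rho_k)^2(1+\epsilon(k))}{2^{n-1}}$$
and showing it equals $e^{-n/2^{k+2}}(1+O(\log n/\sqrt{n}))$ uniformly for $k$ in the given range.

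To do this I would write $\rho_k = \tfrac{1}{2}(1+\alpha_k)$ with $\alpha_k = 2^{-k-2} + O(k 2^{-2k})$. The prefactor $(1-\rho_k)^2$ expands as $\tfrac{1}{4}(1-\alpha_k)^2 = \tfrac{1}{4}(1+O(\alpha_k))$, and when divided by $2^{n-1}$ absorbs the $2^{n+1}$ coming out of $\rho_k^{-n-1} = 2^{n+1}(1+\alpha_k)^{-n-1}$, leaving the constant $1$. The heart of the argument is the expansion
$$-(n+1)\log(1+\alpha_k) = -(n+1)\,2^{-k-2} + O\!\left(n k\,2^{-2k}\right),$$
so that $(1+\alpha_k)^{-n-1} = \exp\!\bigl(-(n+1)2^{-k-2}\bigr)\cdot(1+O(n k\,2^{-2k}))$. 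Exponentiating, splitting off the harmless factor $e^{-2^{-k-2}}=1+O(2^{-k})$ to pass from $(n+1)2^{-k-2}$ to $n\,2^{-k-2} = n/2^{k+2}$, and combining with the factors $(1+O(\alpha_k))$ and $1+\epsilon(k)=1+O(k2^{-k})$ produces the announced main term.

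The only non-routine step is verifying that every error term is $O(\log n/\sqrt{n})$ uniformly in the window $\tfrac{3}{4}\lg n \le k \le 2\lg n$. The binding constraint comes from the lower endpoint $k \approx \tfrac{3}{4}\lg n$: there one has $2^{-2k} \le n^{-3/2}$, so $n k\,2^{-2k} = O(k/\sqrt{n}) = O(\log n/\sqrt{n})$, while $n k\,2^{-k}$ and $n\,\alpha_k^2$ are of the same or smaller order. The upper endpoint is trivial since everything is even smaller there. Thus the error bound holds throughout the stated range, and the main term $e^{-n/2^{k+2}}$ is extracted cleanly.

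For the equivalent second formulation, one simply substitutes $k = \lfloor \lg n\rfloor + h$, so that
$$\frac{n}{2^{k+2}} = \frac{n}{2^{\lfloor \lg n\rfloor}\,2^{h+2}} = \frac{2^{\{\lg n\}}}{2^{h+2}} = \omega(n)\,2^{-h-2},$$
and the admissible range of $k$ translates directly into the admissible range of $h$. The main obstacle, as noted, is disciplined bookkeeping of the errors; once the dominant contribution $(n+1)\alpha_k \sim n\,2^{-k-2}$ is isolated, the double-exponential form appears automatically from exponentiating the dominant singularity's location.
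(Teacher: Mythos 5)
Your proposal is correct and follows essentially the same route as the paper: divide the asymptotic count from Proposition~\ref{prop: number of comp} by $C_n=2^{n-1}$, expand $\rho_k=\tfrac12\left(1+2^{-k-2}+O(k2^{-2k})\right)$ inside the exponential, verify that every error term is $O(\log n/\sqrt{n})$ with the binding case at the lower endpoint $k\approx\tfrac34\lg n$, and substitute $k=\lfloor\lg n\rfloor+h$ to obtain the second form. One harmless slip: the quantity $nk2^{-k}$ that you list among the negligible errors is \emph{not} $O(\log n/\sqrt{n})$ at the lower endpoint (it is of order $n^{1/4}\log n$), but it also never actually arises --- the factor $1+\epsilon(k)$ is a single multiplicative correction, not one compounded $n$ times, so the relevant error there is $k2^{-k}=O(\log n/n^{3/4})$, exactly as your main computation already treats it.
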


\noindent The formulae above do not represent a single distribution, but rather a \emph{family} of distributions indexed by the fractional part of $\lg n$.  The second form given above shows explicitly how the behavior of the distribution depends on the value of $n$ in relation to powers of $2$.  Figure~\ref{fig: histogram} displays the histograms of the exact distributions of $L$ for $n=20, 40, \ldots 500$.  We see that the peak of the distribution does not increase smoothly with $n$, but instead incurs jumps and irregularities, which are a result of the distribution's dependence on $\{\lg n\}$.  Such a phenomenon is analogous to that found in the distribution of longest runs in binary words \cite[p.~308]{Flajolet09analyticCombinatorics}.

\begin{figure}
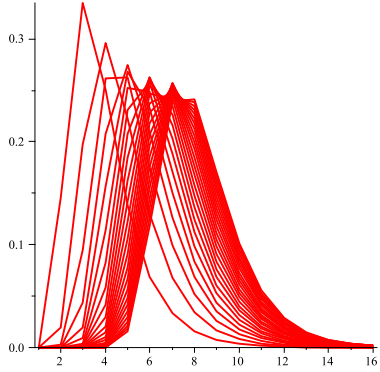


\begin{center}
\Img{2.0}{ayladistrs}
\end{center}

\caption{\label{fig: histogram} 
Histograms of the exact distributions $\mathbb{P}_n(L)$ for $n = 20, 40, \ldots, 500$.
}

\end{figure}

\begin{proof}
Recall that the total number of compositions of size $n$ is $C_n = 2^{n-1}$.  Therefore, by Proposition~\ref{prop: number of comp},
\begin{equation}
\mathbb{P}_n(L< k) = \frac{C_n^{\left<k\right>}}{C_n} = \frac{2}{\rho_k}(2\rho_k)^{-n}(1-\rho_k)^2\left(1+\epsilon(k)\right) + O\left(\left(\frac{5}{6}\right)^n\right),
\label{eq: prob L<k} \end{equation}
where $\epsilon(k) = O(k2^{-k})$.  Note that in the region $\frac{3}{4}\lg n \le k \le 2\lg n$,  the value of $(2\rho_k)^{-n}$ satisfies
\begin{equation}
(2\rho_k)^{-n} = \exp(-n \log (2\rho_k)) = \exp \left(-\frac{n}{2^{k+2}} + O\left(\frac{n}{2^{\frac{3}{2} \lg n}}\right)\right)
= e^{-n/2^{k+2} }\left(1 + O\left(\frac{1}{\sqrt{n}}\right)\right),
\label{eq; rho^n} \end{equation}
and the value of the coefficient $\frac{2}{\rho_k}(1-\rho_k)^2$ is of the form
$$\frac{2}{\rho_k}(1-\rho_k)^2 = 1 + O(2^{-k}) = 1 + O\left(\frac{1}{n^{3/4}}\right).$$
Hence, in this region, 
\begin{align}
\mathbb{P}_n(L < k) & = e^{-n/2^{k+2} }\left(1 + O\left(\frac{1}{\sqrt{n}}\right)\right)\left(1 + O\left(\frac{1}{n^{3/4}}\right)\right)\left(1+O\left(\frac{\lg n}{n^{3/4}}\right)\right) + O\left(\left(\frac{5}{6}\right)^n\right)\nonumber \\ &
=  e^{-n/2^{k+2} }\left(1 + O\left(\frac{\log n}{\sqrt{n}}\right)\right). 
\label{eq: result with k}
\end{align}

We now adjust equation (\ref{eq: result with k}) to reflect the distribution's dependence on the placement of $n$ with respect to powers of $2$.  The floor function is used to emphasize the fact that $k$ must be an integer.
  Let $k = \lg n + x$ where $x \in [-\frac{3}{4}\lg n, \lg n]$.  Then we can write
$$k = \lfloor \lg n \rfloor + \{ \lg n \} + x,$$ 
and we let $h =  \{ \lg n \} + x$.  We require that $h$ be an integer and that $h - \{ \lg n \} \in [-\frac{3}{4}\lg n, \lg n]$.  In other words
$$k =  \lfloor \lg n \rfloor + h, \text{ where } h\in\mathbb{Z}, \text{ such that }  -\frac{3}{4}\lg n + \{ \lg n \} \le h \le \lg n + \{ \lg n \}.$$
Inserting this into equation (\ref{eq: result with k}) we obtain
\begin{align*}
\mathbb{P}_n(L < \lfloor \lg n \rfloor + h) & 
= \exp\left(\frac{-n}{2^{ \lg n - \{ \lg n \} + h+2}}\right)\left(1 + O\left(\frac{\log n}{\sqrt{n}}\right)\right) \\ &
= e^{-2^{\{ \lg n \} - h-2}}\left(1 + O\left(\frac{\log n}{\sqrt{n}}\right)\right),
\end{align*}
as desired.
\end{proof}

In Theorem~\ref{thm: prob}, we only considered a small \emph{central region} about $\lg n$, namely $\frac{3}{4}\lg n \le k \le 2 \lg n$, where the bulk of the distribution is concentrated.  For our subsequent analysis, we will also need restraints on the \emph{tails of the distribution}.  We do this with the following proposition.

\begin{proposition} The tails of the distribution of $L$ are exponentially small.  In particular, for $k<\frac{3}{4}\lg n$, we have 
\begin{equation}
\mathbb{P}_n(L< k)  = O(e^{-\sqrt[4]{n}/4}), \qquad n\rightarrow\infty,
\end{equation}
and for $k = 2\lg n + y$, we have
\begin{equation}
\mathbb{P}_n(L\ge 2 \lg n + y) = O\left(\frac{2^{-y}}{n}\right), \qquad n\rightarrow\infty,
\label{eq: right tail}
\end{equation}
uniformly for $y>0$.
\label{prop: tails}
\end{proposition}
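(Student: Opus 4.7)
For the left tail ($k < \tfrac{3}{4}\lg n$), the plan is to apply Proposition \ref{prop: number of comp} directly; it remains valid for all integers $k \geq 4$, even outside the range of Theorem \ref{thm: prob}. This gives
$$\mathbb{P}_n(L<k) = \frac{2}{\rho_k}(1-\rho_k)^2(1+\epsilon(k))\,(2\rho_k)^{-n} + O\!\left(\left(\tfrac{5}{6}\right)^n\right),$$
with the coefficient uniformly bounded and $\epsilon(k) = O(k\,2^{-k})$. From Proposition \ref{prop: rho}, $\log(2\rho_k) = 2^{-k-2}(1 + O(k\,2^{-k}))$, so $n\log(2\rho_k) \geq n\,2^{-k-2}(1 - o(1))$ uniformly for $k \geq 4$. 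For integer $k < \tfrac{3}{4}\lg n$, the dominant term satisfies $n\,2^{-k-2} \geq n^{1/4}/4$, while the relative correction $O(k\,2^{-k})$ is at most $O(\log n / n^{3/4})$ in this range, so after multiplication by $n\,2^{-k-2}$ it contributes only $O(\log n / \sqrt n) = o(1)$ in the exponent. Hence $(2\rho_k)^{-n} = O(e^{-n^{1/4}/4})$, and the $O((5/6)^n)$ term is exponentially smaller, giving the first claim.

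For the right tail ($k = 2\lg n + y$, $y > 0$), I would instead use a direct union bound that bypasses further singularity analysis. Any composition of $n$ containing a run of $k$ equal parts of value $j$ decomposes as a triple $(\alpha, j^k, \beta)$, with $\alpha, \beta$ compositions (possibly empty) and $|\alpha| + jk + |\beta| = n$. Summing over all $(j, \text{position})$ pairs overcounts compositions with $L \geq k$ but yields the generating-function bound
$$|\{L \geq k\}| \leq [z^n]\left(\frac{1-z}{1-2z}\right)^{2} \frac{z^k}{1-z^k},$$
where $(1-z)/(1-2z)$ enumerates possibly empty compositions. A direct expansion gives $[z^N](1-z)^2/(1-2z)^2 = (N+3)\,2^{N-2}$, so the right-hand side is at most $O(n\,2^{n-k})$. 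Dividing by $C_n = 2^{n-1}$ gives $\mathbb{P}_n(L \geq k) = O(n\,2^{-k})$, and substituting $k = 2\lg n + y$ yields the claimed $O(2^{-y}/n)$, uniformly for $y > 0$.

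The main technical subtlety lies in the left tail. As $k$ approaches $\tfrac{3}{4}\lg n$ from below, $n\,2^{-k-2}$ sits essentially at the threshold $n^{1/4}/4$, so one must verify that the sub-leading correction $O(k\,2^{-k})$ in $\log(2\rho_k)$ does not spoil the sharp constant $1/4$; the saving $n^{1/4}\cdot k\,2^{-k} = O(\log n/\sqrt n) \to 0$ is exactly what is needed. The right-tail argument is technically much gentler, with the overcount costing only a factor of $n$, which is comfortably absorbed into the $1/n$ in the bound.
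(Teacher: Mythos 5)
Your proposal is correct, and its two halves relate to the paper's proof differently. For the left tail you do essentially what the paper does: the paper bounds $\mathbb{P}_n(L<k)\le\mathbb{P}_n(L<\tfrac34\lg n)$ by monotonicity and evaluates the central-region formula at the boundary, where $e^{-n/2^{k+2}}=e^{-n^{1/4}/4}$; you instead apply Proposition~\ref{prop: number of comp} at each $k\ge4$ and track the exponent directly, which amounts to the same computation (you should still add one line of monotonicity, $\mathbb{P}_n(L<k)\le\mathbb{P}_n(L<4)$, to cover $1\le k\le3$, where Proposition~\ref{prop: number of comp} does not apply). For the right tail your argument is genuinely different and, as written, more robust. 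The paper proves \eqref{eq: right tail} by pushing the analytic estimate $\mathbb{P}_n(L<k)=\tfrac{2}{\rho_k}(1-\rho_k)^2e^{-n/2^{k+2}}(1+O(\cdot))$ out to $k=2\lg n+y$, i.e.\ beyond the range $k\le2\lg n$ in which Theorem~\ref{thm: prob} is stated, and extracts $1-e^{-2^{-y}/(4n)}=O(2^{-y}/n)$; making the error term uniform in $y$ there requires care, since a relative error of size $O(\log n/\sqrt n)$ in $\mathbb{P}_n(L<k)$ is not automatically $O(2^{-y}/n)$ once $y$ is large. Your first-moment/union bound $|\{L\ge k\}|\le[z^n]\left(\frac{1-z}{1-2z}\right)^2\frac{z^k}{1-z^k}$ sidesteps all of this: each composition with $L\ge k$ admits at least one decomposition $(\alpha,j^k,\beta)$, the coefficient computation $[z^N](1-z)^2/(1-2z)^2=(N+3)2^{N-2}$ is correct, and the resulting $\mathbb{P}_n(L\ge k)=O(n2^{-k})$ is elementary, manifestly uniform in $k$, and specializes to $O(2^{-y}/n)$. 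The only price is the factor $n$ from overcounting, which the claimed bound absorbs; in exchange you get the uniformity in $y$ essentially for free, where the paper's route has to argue for it.
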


\noindent In the right tail, we introduce the additional parameter of $y$ to establish that, when $k>2\lg n$, the probability $\mathbb{P}_n(L\ge  k)$ is not only exponentially small, but also \emph{uniformly decreasing} as $k\rightarrow\infty$.

\begin{proof}
For $k<\frac{3}{4}\lg n$, we find
$$\mathbb{P}_n(L< k) \le \mathbb{P}_n(L< \frac{3}{4}\lg n) = \frac{2}{\rho_k}(1-\rho_k)^2 e^{-n/(4n^{3/4})}\left(1 + O\left(\frac{\log n}{\sqrt{n}}\right)\right) = O(e^{-\sqrt[4]{n}/4}),$$
while for $k = 2\lg n + y$ with $y>0$, we find
\begin{align*}
\mathbb{P}_n(L\ge 2 \lg n + y) & \le 1 - \mathbb{P}_n(L<  2 \lg n + y)  \\ &
= 1 - \frac{2}{\rho_k}(1-\rho_k)^2 e^{-(2^{-y})/(4n)}\left(1+ O\left(\frac{\log n}{n}\right)\right) \\ &
 = 1 - O(e^{-(2^{-y})/(4n)})  \\ & 
 = O\left(\frac{2^{-y}}{n}\right).
\end{align*}
\end{proof}

\section{Mean and Variance}

Finally, we estimate the mean and variance of the distribution.  As suggested by Theorem~\ref{thm: prob}, the scale is $\lg n$ with some periodic fluctuations.  The fluctuation is of a kind frequently encountered in the analysis of algorithms (see \cite{Flajolet95mellintransforms}).  For instance, the equations for the mean and variance here bear a striking resemblance to those found by Archibald, Knopfmacher, and Prodinger in their analysis of the number of distinct letters in geometrically distributed sequences \cite{Archibald06distinctvalues}.
\begin{theorem} 
The expected value of $L$ satisfies 
$$\mathbb{E}_n(L) = \lg n + \frac{\gamma}{\log 2} - \frac{5}{2} + P(\lg n) + O\left(\frac{\log^2 n}{\sqrt{n}}\right),$$
where $P$ is a continuous periodic function whose Fourier expansion is given by
$$P(w) = -\frac{1}{\log 2} \sum_{k\in\mathbb{Z}\setminus \{0\}} \Gamma \left(\frac{2ik\pi}{\log 2}\right)e^{-2ik\pi w}.$$
\label{thm: expectation} 
\end{theorem}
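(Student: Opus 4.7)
The overall strategy is classical: I would first express the mean as a tail sum
\[
\mathbb{E}_n(L) = \sum_{j\ge 1}\mathbb{P}_n(L\ge j) = \sum_{j\ge 1}\bigl(1-\mathbb{P}_n(L<j)\bigr),
\]
insert the asymptotic estimates from Theorem~\ref{thm: prob} and Proposition~\ref{prop: tails}, and evaluate the resulting sum by Mellin analysis. Splitting the range of $j$ at $\lceil\tfrac{3}{4}\lg n\rceil$ and $\lfloor 2\lg n\rfloor$, each summand on the left tail equals $1$ up to a term exponentially small in $n^{1/4}$, while the right tail contributes only $O(1/n)$ by the geometric bound $\mathbb{P}_n(L\ge 2\lg n+y)=O(2^{-y}/n)$. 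On the central range Theorem~\ref{thm: prob} gives $1-\mathbb{P}_n(L<j) = 1 - e^{-n/2^{j+2}} + e^{-n/2^{j+2}}\cdot O(\log n/\sqrt n)$, and since $\sum_j e^{-n/2^{j+2}}=O(\log n)$, the accumulated error is $O(\log^2 n/\sqrt n)$. After extending the truncated sum of $1 - e^{-n/2^{j+2}}$ back to all $j\ge 1$, which only adds exponentially small or $O(1/n)$ contributions, the mean reduces to
\[
\mathbb{E}_n(L) = S(n) + O\!\left(\frac{\log^2 n}{\sqrt n}\right),\qquad S(n):=\sum_{j\ge 1}\bigl(1-e^{-n/2^{j+2}}\bigr).
\]

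The sum $S(n)$ is a harmonic sum handled by the standard Mellin transform machinery. Since the Mellin transform of $f(x)=1-e^{-x}$ equals $-\Gamma(s)$ on the strip $-1<\Re s<0$, one obtains
\[
S^{*}(s) = -\Gamma(s)\,\frac{2^{3s}}{1-2^s},\qquad -1<\Re s<0.
\]
Mellin inversion, followed by shifting the contour from a line in that strip to, say, $\Re s = 1/2$, picks up the residues at the poles swept in the process: a double pole at $s=0$ created by the coincidence of the simple poles of $\Gamma(s)$ and of $1/(1-2^s)$, together with simple poles at the purely imaginary points $\chi_k = 2ik\pi/\log 2$ for $k\neq 0$, inherited from $1/(1-2^s)$. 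The leftover integral along $\Re s = 1/2$ is $O(n^{-1/2})$, which is safely absorbed in the announced error.

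A short Laurent expansion at $s=0$, using $\Gamma(s)=1/s-\gamma+O(s)$ and $(1-2^s)^{-1}=-1/(s\log 2) + 1/2 + O(s)$, produces the main term $\lg n + \gamma/\log 2 - 5/2$. At each imaginary pole $\chi_k$ one uses $2^{\chi_k}=1$ and $n^{-\chi_k}=e^{-2ik\pi\lg n}$, so the contributions from these poles aggregate exactly into
\[
P(\lg n) = -\frac{1}{\log 2}\sum_{k\in\mathbb Z\setminus\{0\}}\Gamma\!\left(\frac{2ik\pi}{\log 2}\right)e^{-2ik\pi\lg n},
\]
an absolutely convergent Fourier series thanks to the exponential decay $|\Gamma(\sigma+it)|=O(|t|^{\sigma-1/2}e^{-\pi|t|/2})$ on vertical lines.

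The two places where I expect to need genuine care are the justification of the contour shift and the bookkeeping of the error terms. The first requires the familiar bounds on $\Gamma(s)/(1-2^s)$ along horizontal segments to let the height of the rectangular contour tend to infinity, exactly as in the binary longest-run treatment in \cite{Flajolet09analyticCombinatorics}. The second is more bookkeeping than difficulty: one needs to check, across the three pieces of the range, that the $O(\log n/\sqrt n)$ relative error from Theorem~\ref{thm: prob}, the exponentially small left-tail estimate, and the $O(1/n)$ right-tail estimate really combine into the claimed $O(\log^2 n/\sqrt n)$ remainder. Once those two points are in place, the remainder of the proof is routine algebra.
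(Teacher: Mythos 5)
Your proposal is correct and follows essentially the same route as the paper: the tail-sum identity, the three-way split of the range using Proposition~\ref{prop: tails} and Theorem~\ref{thm: prob}, reduction to the harmonic sum $\sum_{j\ge 1}(1-e^{-n/2^{j+2}})$ with error $O(\log^2 n/\sqrt n)$, and Mellin analysis with a double pole at $s=0$ and simple poles at $2ik\pi/\log 2$. The only cosmetic difference is that you transform the sum directly (picking up the factor $2^{3s}/(1-2^s)$) where the paper normalizes to $\Phi(x)=\sum_{h\ge0}(1-e^{-x/2^h})$ and evaluates at $x=n/4$; both yield the same constant $-5/2$ and the same $P(\lg n)$.
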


\noindent The oscillating function $P(w)$ has mean value $0$ and is found to have tiny fluctuations, along the order of $10^{-5}$.

\begin{proof}
We have
$$\mathbb{E}_n(L) = \sum_{k \ge 1} \mathbb{P}_n(L \ge k) = \sum_{k \ge 1} \left(1 - \mathbb{P}_n(L < k)\right).$$
We evaluate the sum in three pieces: the two tails (Proposition~\ref{prop: tails}), and the central region where the distribution is concentrated (Theorem~\ref{thm: prob}).  For the left tail, i.e., $k<\frac{3}{4} \lg n$, we have $\mathbb{P}_n(L>k)$ exponentially close to $1$.  More precisely,
\begin{align}
\sum_{1 \le k < \frac{3}{4}\lg n} \left(1 - \mathbb{P}_n(L < k)\right) & = \sum_{1 \le k < \frac{3}{4}\lg n} \left(1 - O(e^{-\sqrt[4]{n}/4})\right) \nonumber \\ &
= \sum_{1 \le k < \frac{3}{4}\lg n} (1 - e^{-n/2^{k+2}}) + O\left( e^{-\sqrt[4]{n}/4} \log n  \right) + O\left( e^{-n/2^{k+2}} \log n \right) \nonumber \\ &
= \sum_{1 \le k < \frac{3}{4}\lg n} (1 - e^{-n/2^{k+2}})  + O\left(\frac{\log n}{\sqrt{n}}\right). 
\label{eq: sum in left tail}
\end{align}

For the right tail, where $k>2 \lg n$, we have $\mathbb{P}_n(L>k)$ is both exponentially small and uniformly decreasing as $k\rightarrow \infty$, as shown by Proposition~\ref{prop: tails}.  We can therefore write
$$\sum_{k > 2 \lg n} \mathbb{P}_n(L \ge k) = O\left(\mathbb{P}_n(L \ge 2 \lg n)\right) = O\left(\frac{1}{n}\right).$$
By a similar argument, we can also show that
\begin{equation}
\sum_{k > 2 \lg n}\left(1 - e^{-n/2^{k+2} }\right) = O(1/n).
\label{eq: sum in right tail}
\end{equation}
Finally, for the central region, we have
$$\mathbb{P}_n(L < k) = e^{-n/2^{k+2}} \left(1+O\left(\frac{\log n}{\sqrt{n}}\right)\right).$$
Hence
\begin{align}
\sum_{\frac{3}{4}\lg n \le k \le 2\lg n} \left(1 - \mathbb{P}_n(L < k)\right) &
 = \sum_{\frac{3}{4}\lg n \le k \le 2\lg n} \left(1 - e^{-n/2^{k+2} }\right) 
 + e^{-n/2^{k+2} }O\left(\frac{\log n}{\sqrt{n}}\right)O(\log n) \nonumber \\ & 
 =  \sum_{\frac{3}{4}\lg n \le k \le 2\lg n} \left(1 - e^{-n/2^{k+2} }\right) + O\left(\frac{\log^2 n}{\sqrt{n}}\right). \label{eq: sum in central region}
\end{align}

Combining equations (\ref{eq: sum in left tail}), (\ref{eq: sum in right tail}), and (\ref{eq: sum in central region}), we obtain
$$\mathbb{E}_n(L) = \sum_{k \ge 1} (1 - e^{-n/2^{k+2}})  + O\left(\frac{\log^2 n}{\sqrt{n}}\right),$$
which can be rewritten as
\begin{equation}
\mathbb{E}_n(L) = \Phi\left(\frac{n}{4}\right) - 1 +  O\left(\frac{\log^2 n}{\sqrt{n}}\right), \quad \text{where} \quad \Phi(x) = \sum_{h \ge 0} (1 - e^{-x/2^{h}}).
\label{eq: mean with phi}
\end{equation}

We can now obtain precise asymptotic information about $\Phi(x)$ through \emph{Mellin transform techniques} (see \cite{Flajolet95mellintransforms}).  The Mellin transform of $\Phi(x)$ is
$$ \Phi^*(s) := \int_0^\infty \Phi(x) x^{s-1} dx = - \frac{\Gamma(s)}{1-2^s}, \qquad \Re(s) \in (-1,0).$$ 
We see that $\Phi^*(s)$ has a double pole at $s=0$ and simple poles at $s = \frac{2ik\pi}{\log2}$, which indicate an asymptotic expansion of $\Phi(x)$ that involves a Fourier series.  We obtain
$$\Phi(x) = \lg x + \frac{\gamma}{\log 2} + \frac{1}{2} + P(\lg x) + O\left(\frac{1}{x}\right), \quad P(w) = -\frac{1}{\log 2} \sum_{k\in\mathbb{Z}\setminus \{0\}} \Gamma \left(\frac{2ik\pi}{\log 2}\right)e^{-2ik\pi w}.$$
Evaluating $\Phi(x)$ at $x = \frac{n}{4}$ and substituting into equation (\ref{eq: mean with phi}) gives the desired result.
\end{proof}

\begin{theorem}
The variance of $L$ satisfies 
$$\mathbb{V}_n(L) = \frac{1}{12} + \frac{\pi^2}{6\log^2 2} + \epsilon + O\left(\frac{\log ^4 n}{\sqrt{n}}\right),
$$
where $|\epsilon| < 10^{-4}$.
\end{theorem}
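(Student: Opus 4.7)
The plan is to compute $\mathbb{V}_n(L) = \mathbb{E}_n(L^2) - \mathbb{E}_n(L)^2$ by pairing the expansion of $\Phi$ from Theorem~\ref{thm: expectation} with an analogous Mellin analysis of a second auxiliary function. I would start from the identity
$$\mathbb{E}_n(L^2) = \sum_{k \ge 1}(2k-1)\,\mathbb{P}_n(L \ge k)$$
and repeat the three-region splitting used in the proof of Theorem~\ref{thm: expectation}. The tail estimates of Proposition~\ref{prop: tails}, now weighted by the factor $(2k-1)$, contribute only lower-order errors: in the left tail one replaces $\mathbb{P}_n(L < k)$ by $e^{-n/2^{k+2}}$ at a cost of $O(\log^4 n/\sqrt{n})$ (the extra $\log^2 n$ comes from $\sum_{k<\frac{3}{4}\lg n}(2k-1)$), and the right tail is $O(1/n)$ by the uniform bound~(\ref{eq: right tail}). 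This reduces the task to the asymptotics of
$$\Psi(x) := \sum_{k\ge 1}(2k-1)\bigl(1 - e^{-x/2^k}\bigr) \quad\text{at}\quad x = \frac{n}{4}.$$

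Next I would apply the Mellin transform. A direct calculation gives fundamental strip $(-1,0)$ and
$$\Psi^*(s) = -\Gamma(s)\,\frac{2^s(1+2^s)}{(1-2^s)^2},$$
which has a triple pole at $s=0$ and double poles at $s_k = 2ik\pi/\log 2$ for $k\in\mathbb{Z}\setminus\{0\}$. Expanding the Laurent series at $s=0$ using
$$\Gamma(s) = \frac{1}{s} - \gamma + s\!\left(\frac{\gamma^2}{2} + \frac{\pi^2}{12}\right) + O(s^2), \qquad \frac{1}{1-2^s} = -\frac{1}{s\log 2} + \frac{1}{2} - \frac{s\log 2}{12} + O(s^2),$$
the residue at $s=0$ produces a degree-$2$ polynomial in $\lg x$ with explicit coefficients mixing $\gamma$, $\pi^2/6$, and rational numbers. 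At each double pole $s_k$, the residue decomposes as a constant Fourier mode plus a $\lg x$-Fourier mode; a short check identifies the latter as exactly $2\lg x \cdot P(\lg x)$, where $P$ is the fluctuation of Theorem~\ref{thm: expectation}. Then forming $\mathbb{V}_n(L) = \Psi(n/4) - \Phi(n/4)^2 + 2\Phi(n/4) - 1 + O(\log^4 n/\sqrt{n})$ and substituting, the $(\lg n)^2$ terms cancel, the mixed $2\lg n \cdot P(\lg n)$ contributions cancel across the two expansions, and the residual linear terms in $\lg n$ vanish thanks to the $+2\Phi(n/4)$ correction, leaving the asserted constant plus an oscillating residue $\epsilon$.

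The main obstacle will be the bookkeeping at the triple pole: the summand $1/12$ originates in the $-s\log 2/12$ coefficient in the expansion of $1/(1-2^s)$, and $\pi^2/(6\log^2 2)$ arises from the $\pi^2/12$ coefficient of $\Gamma(s)$ combined with the $1/(s\log 2)^2$ principal part of $1/(1-2^s)^2$, so the correct constant only emerges after all three expansions are multiplied consistently to order $s^2$. The cancellation of the $\lg n \cdot P(\lg n)$ cross term reduces to a polynomial identity between the $t^0$ and $t^1$ coefficients in the local expansions of $\Psi^*(s_k + t)$ and $\Phi^*(s_k + t)^2$. The numerical bound $|\epsilon| < 10^{-4}$ is then inherited from the rapid decay $|\Gamma(2ik\pi/\log 2)| = O\!\bigl(|k|^{-1/2}\exp(-\pi^2|k|/\log 2)\bigr)$ of the Fourier coefficients, which already forced $|P| \lesssim 10^{-5}$ in Theorem~\ref{thm: expectation}; the remainder $\epsilon$ is a bounded combination of $P(\lg n)$, $P(\lg n)^2$, and an analogous Fourier series $\tilde Q(\lg n)$ arising from the constant modes at the $s_k$, all of the same minute order.
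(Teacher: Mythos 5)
Your proposal follows essentially the same route as the paper: the identity $\mathbb{E}_n(L^2)=\sum_{k\ge1}(2k-1)\mathbb{P}_n(L\ge k)$, the same three-region reduction to an auxiliary harmonic sum $\Psi$, a Mellin transform with a triple pole at $s=0$ and double poles at $2ik\pi/\log 2$, and the final cancellation of the $\lg^2 n$ and $\lg n\cdot P(\lg n)$ terms against $\mathbb{E}_n(L)^2$, leaving $\tfrac{1}{12}+\tfrac{\pi^2}{6\log^2 2}$ plus a tiny fluctuation built from $P$, $P^2$, and a second Fourier series $Q$. The only differences are cosmetic (your $\Psi$ is indexed from $k\ge1$ rather than $h\ge0$, which shifts the transform to $-\Gamma(s)2^s(1+2^s)/(1-2^s)^2$ and replaces the paper's ``$\Psi(n/4)+1$'' bookkeeping by your ``$+2\Phi(n/4)-1$'' correction, and your tail error bound is slightly more generous than the paper's $O(\log^3 n/\sqrt n)$ but still within the stated $O(\log^4 n/\sqrt n)$), so the argument is correct as planned.
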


\begin{proof}
We use the identity
$$\mathbb{V}_n(L) = \mathbb{E}_n(L^2) - \mathbb{E}_n(L)^2,$$
and we start by computing the second moment of $L$.  The methods are similar to those used to compute $\mathbb{E}_n(L)$, and their presentation is abbreviated here.  We have
\begin{align*}
\mathbb{E}_n(L^2) & 
= \sum_{k=1}^{\infty} k^2 \mathbb{P}_n(L=k) = \sum_{k=1}^{\infty} (2k-1)  \mathbb{P}_n(L\ge k) \\ & 
= \sum_{k=1}^{\infty} (2k-1)(1 -  \mathbb{P}_n(L < k)) \\ &
= \sum_{k \ge 1}(2k-1)(1 - e^{-n/2^{k+2}}) + O\left(\frac{\log^3 n}{\sqrt{n}}\right) \\ &
= \Psi\left(\frac{n}{4}\right) + 1 +  O\left(\frac{\log^3 n}{\sqrt{n}}\right),
\end{align*}
where $\Psi(x) = \sum_{h \ge 0}(2h-1)(1 - e^{-x/2^{h}})$. 

The Mellin transform of $\Psi(x)$ is given by
$$\Psi^*(s) = -\Gamma(s)\sum_{h\ge 0} (2h-1)2^{sh} = -\frac{\Gamma(s)(1-3\cdot2^s)}{(1-2^s)^2}.$$
The transform has a triple pole at $s=0$ and double poles at $s= \frac{2ik\pi}{\log 2}$ for $k\in\mathbb{Z}\setminus\{0\}$, the singular expansions of which yield the following asymptotic form of $\Psi(x)$:
$$\Psi(x) = \lg^2 x + \lg x\left(\frac{2\gamma}{\log 2} - 1 + 2P(\lg x)\right) - \frac{2}{3} + \frac{\pi^2 + 6\gamma^2}{6\log^2 2} - \frac{\gamma}{\log 2} -P(\lg x) + Q(\lg x) + O\left(\frac{1}{x}\right),$$
where $P(w)$ is as in the statement of Theorem~\ref{thm: expectation} and $Q(w)$ is a periodic function with Fourier expansion
$$Q(w)  = \frac{2}{\log^2 2} \sum_{k\in\mathbb{Z}\setminus\{0\}} \psi\left(\frac{2ik\pi}{\log2}\right)\Gamma\left(\frac{2ik\pi}{\log2}\right)e^{-2ik\pi w}.$$

We therefore have
\begin{align}
\mathbb{E}_n(L^2) & = \lg^2 n + \lg n\left(\frac{2\gamma}{\log 2} - 5 + 2P(\lg n)\right) + \frac{19}{3} + \frac{\gamma^2}{\log^2 2} + \frac{\pi^2}{6\log^2 2} \nonumber \\ &
 - \frac{5\gamma}{\log 2} - 5P(\lg n) + Q(\lg n) + O\left(\frac{\log^3 n}{\sqrt{n}}\right). \label{eqn: E(L2)}
\end{align}
Meanwhile, the square of $\mathbb{E}_n(L)$ satisfies
\begin{align}
\mathbb{E}_n(L)^2 & = \lg^2 n + \lg n\left(\frac{2\gamma}{\log2} - 5 + 2P(\lg n)\right) + \frac{25}{4}
+ \frac{\gamma^2}{\log^2 2}  \nonumber \\ & 
- \frac{5\gamma}{\log 2} - 5P(\lg n) + \frac{2\gamma}{\log 2}P(\lg n)  + P(\lg n)^2+ O\left(\frac{\log ^4 n}{\sqrt{n}}\right). \label{eqn: E(L) square}
\end{align}
Subtracting equation (\ref{eqn: E(L) square}) from equation (\ref{eqn: E(L2)}), we obtain
\begin{align*}
\mathbb{V}_n(L) & = \frac{1}{12} + \frac{\pi^2}{6\log^2 2} + Q(\lg n)- \frac{2\gamma}{\log 2}P(\lg n) - P(\lg n)^2 +  O\left(\frac{\log ^4 n}{\sqrt{n}}\right) \\ &
=  \frac{1}{12} + \frac{\pi^2}{6\log^2 2} + \epsilon + O\left(\frac{\log ^4 n}{\sqrt{n}}\right),
\end{align*}
where $|\epsilon| < 10^{-4}$.
\end{proof}

\begin{figure}
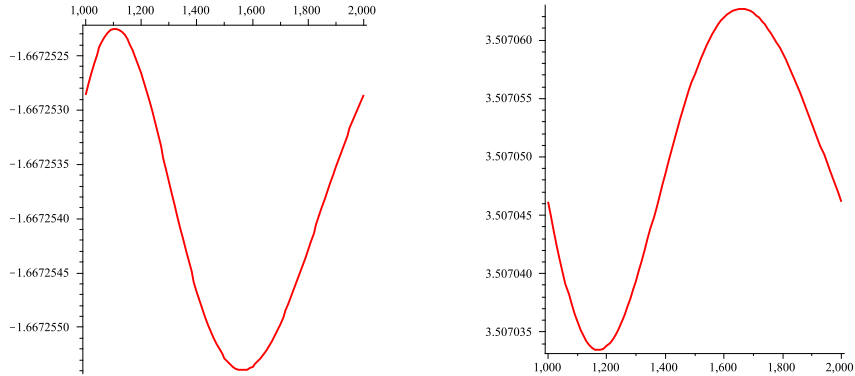


\begin{center}
\Img{2.0}{aylamean}\hspace*{0.5truein}\Img{2.0}{aylavari}
\end{center}

\caption{\label{periodic-fig}
The functions
${\Phi\left(\frac{x}{4}\right) - \lg x - 1}$ (left) and ${\Psi\left(\frac{x}{4}\right)+ 1 - \left(\Phi\left(\frac{x}{4}\right)-1\right)^2}$ (right), which represent the ``constant parts'' in the asymptotic form of  the mean and variance of $L$.
}

\end{figure}

Figure~\ref{periodic-fig} displays the periodic functions associated with the mean and the variance.  We see that the amplitudes of the fluctuations in the periodic functions are very small, about $1.6\times10^{-6}$ in the mean and $1.5\times10^{-5}$ in the variance.

\section{Conclusion}

It is well-known that the total number of summands in a random integer composition of size $n$ is $\sim n/2$ both on average and in probability.  Furthermore, the number of summands equal to $1,2,3\ldots$ is close to $\frac{n}{4}, \frac{n}{8}, \frac{n}{16},\ldots$, respectively \cite[p.~168]{Flajolet09analyticCombinatorics}.  Using this elementary fact, the longest run problem could alternatively be approached by studying words of length $n/2$ and looking at the longest run of 1's, then the longest run of 2's, and so on, and combining the results.  To do so, we can make use of the fact that in a  word of length $\nu$ over the binary alphabet $\{a,b\}$, where $a$ occurs with relative frequency $p$, the expected length of the longest run of $a$'s is 
$$k_0 = \log_{1/p}\nu.$$
In a random composition of size $n$, the summand $r$ occurs with relative frequency approximately $1/2^r$, so we would expect the longest run of $r$'s to be roughly ($\nu\rightarrow n/2$)
$$k_{0}(r) \sim \log_{2^r}\frac{n}{2} \sim \frac{\lg n}{r}.$$

This heuristic approach can be made rigorous by analytic methods.  One would alter the generating functions to record the longest run of a particular summand $r$ in the composition, and then carry out analysis of the dominant pole $\rho_k(r)$.  The important observation is that the exponent of $\frac{1}{2}$ in the approximation of $\rho_k(r)$ translates directly to the coefficient of $\lg n$ in the expectation of the longest run of $r$'s.  The technical details are entirely similar to the analysis in this note.  We obtain:

\begin{proposition}
Let $L_r$ be a random variable representing the longest run of $r$'s in a composition.  Then we have 
\begin{equation}
\mathbb{E}_n(L_r) = \frac{1}{r} \lg n + O(1).
\label{eq: heuristic}
\end{equation}
\end{proposition}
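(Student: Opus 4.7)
The plan is to adapt the entire analytic pipeline of Sections~1--3 to track only the runs of a single summand value $r$. The starting point is a refined generating function $C_r^{\langle k\rangle}(z)$ enumerating compositions with no run of $k$ consecutive $r$'s (but arbitrary runs of other summands). Using the Smirnov-words derivation from Section~1, I would apply the restriction substitution $y_r\mapsto (y_r-y_r^k)/(1-y_r)$ only to the letter $a_r$ while leaving every other letter $a_j$ unrestricted ($y_j\mapsto y_j/(1-y_j)$), and then substitute $y_j=z^j$. This yields
$$C_r^{\langle k\rangle}(z) = \left(1 - \frac{z}{1-z} + \frac{z^{rk}(1-z^r)}{1-z^{rk}}\right)^{-1},$$
which has the same shape $1/(f(z)+g_r(z))$ as the denominator studied before, with $f(z)=1-z/(1-z)$ unchanged and a new, even smaller perturbation $g_r(z)=z^{rk}(1-z^r)/(1-z^{rk})$.

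Next I would repeat Propositions~\ref{prop: rho}--\ref{prop: number of comp} with this new $g_r$ in place of $g$. The fixed-point iteration for $z=(1+g_r(z))/(2+g_r(z))$ starting from $z_0=1/2$ gives, after one step,
$$\rho_k(r) = \tfrac{1}{2}\bigl(1 + (1-2^{-r})\,2^{-rk-1} + O(2^{-2rk})\bigr), \qquad k\to\infty.$$
The Rouché isolation argument carries over unchanged on the circle $|z|=3/5$ (the estimate $|g_r(z)|\le(1.6)(0.6)^{rk}/(1-0.6^{rk})^2$ is even stronger than before once $rk\ge 4$), so the Residue Theorem produces
$$C_{n,r}^{\langle k\rangle} = \rho_k(r)^{-n-1}(1-\rho_k(r))^2\bigl(1+O(k\,2^{-rk})\bigr) + O\bigl((5/3)^n\bigr).$$
Dividing by $2^{n-1}$ and inserting the expansion of $\rho_k(r)$, I obtain in the central region $k\approx\frac{1}{r}\lg n$ the double exponential
$$\mathbb{P}_n(L_r < k) = \exp\!\left(-\frac{(1-2^{-r})\,n}{2^{rk+1}}\right)\left(1+O\!\left(\frac{\log n}{\sqrt n}\right)\right),$$
and the usual tail estimates (analogous to Proposition~\ref{prop: tails}) confirm that this approximation controls both tails.

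Finally, to recover the expectation I would use $\mathbb{E}_n(L_r)=\sum_{k\ge 1}(1-\mathbb{P}_n(L_r<k))$ and split the sum into a left tail, central region, and right tail exactly as in Theorem~\ref{thm: expectation}. The dominant contribution reduces to a harmonic sum of the form $\sum_{h\ge 0}(1-\exp(-c_r n/2^{rh}))$, whose Mellin transform is $-\Gamma(s)/(1-2^{rs})$ and has a double pole at $s=0$ with residue contributing a leading term $\frac{1}{r}\lg n$; all the remaining residues feed into $O(1)$ constants and the usual tiny periodic fluctuation. This gives the announced $\mathbb{E}_n(L_r)=\frac{1}{r}\lg n + O(1)$.

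The only genuinely new step is the first one: setting up the generating function by applying the Smirnov restriction to a single letter rather than all letters. Everything downstream is cosmetic bookkeeping in which the exponent $k$ gets replaced by $rk$, and this single change is precisely what turns the coefficient $1$ of $\lg n$ in Theorem~\ref{thm: expectation} into the coefficient $\frac{1}{r}$ claimed here. Because we only need a $O(1)$ error term, the finer considerations involving Rouché constants or the small-amplitude periodic function $P$ can be absorbed into the error, making this proposition noticeably easier than Theorem~\ref{thm: expectation}.
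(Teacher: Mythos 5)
Your proposal is correct and follows exactly the route the paper indicates: you modify the Smirnov-word substitution so that only the letter $a_r$ is restricted, locate the perturbed dominant pole $\rho_k(r)=\tfrac12\bigl(1+(1-2^{-r})2^{-rk-1}+O(2^{-2rk})\bigr)$, and read off the coefficient $\tfrac1r$ of $\lg n$ from the exponent $rk$ via the Mellin analysis --- precisely the plan the paper sketches in its Conclusion (``alter the generating functions \dots\ the exponent of $\frac12$ in the approximation of $\rho_k(r)$ translates directly to the coefficient of $\lg n$''). In fact you supply more technical detail than the paper itself, which only asserts that the details are entirely similar to the main analysis.
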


\begin{figure}
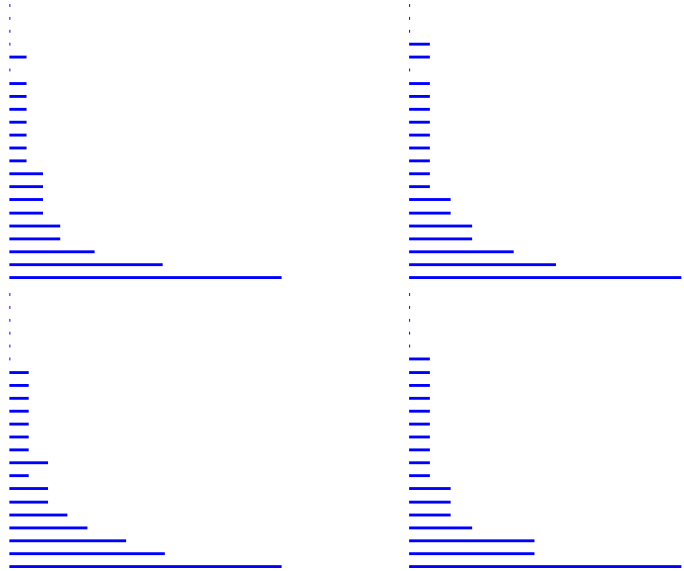


\begin{center}
\Img{1.5}{compa} \hspace{.5in} \Img{1.5}{compb} \\ \Img{1.5}{compc} \hspace{.5in} \Img{1.5}{compd}
\end{center}

\caption{\label{fig: largest summand}
The longest run, $L_r$, of each integer $r$ in four compositions of size $10^5$.  The integers are represented in increasing order from bottom to top (the bottom line represents $L_1$).
}
\end{figure}

Again, the distribution is highly concentrated around the mean.  We then see that $L\sim L_1$ with high probability.  In fact, since the length of the longest run of 1's is likely to be twice that of 2's, and since the $L_r$ have small variance, we should expect the longest run of equal parts to almost certainly be a run of $1$'s.  We can even infer that the constant term of $\mathbb{E}_n(L_1)$ will coincide with that of $\mathbb{E}_n(L)$, given in Theorem~\ref{thm: expectation}.  This intuition is supported by Figure~\ref{fig: largest summand}, which displays the graphs of $L_r$ for four simulations of random integer compositions of size $10^5$.  We see that $L_1$ is much greater than $L_2$ in all cases.  

Equation \ref{eq: heuristic}, though only valid for fixed $r$, suggests that the largest summand (i.e., the largest $r$ such that $L_r\ge1$) should be close to $\lg n$, which is a true fact \cite[p.~169]{Flajolet09analyticCombinatorics}.  This is also illustrated by Figure~\ref{fig: largest summand}, where we see that $L_r$ is strictly positive until around $r \sim \lg n$, when it alternates sporadically between 0 and 1 until it ultimately peters out completely.  Such behavior suggests that in a random composition of size $n$, all summands up to $\lg n + O(1)$ will be present with high probability.  Meanwhile, summands at $\lg n + \omega(n)$, for any $\omega(n) \rightarrow\infty$, will likely not occur at all.  This is consistent with the analysis of Archibald, Knopfmacher, and Prodinger in \cite{Archibald06distinctvalues}, where the number of distinct values in a geometrically distributed sequence was found to be $\lg n +O(1)$.  

In our analysis, we found a periodic fluctuation in the distribution which is common to several problems in analysis of algorithms.  These common fluctuations are not coincidence, but rather the result of an underlying structure of the singularities, namely a geometric displacement of a fixed pole $\rho_k$ with respect to $1/2$ (see Proposition \ref{prop: rho}).  This structure induces the same asymptotic form in the expectations of several values, which on the surface seem to be unrelated (see \cite{Flajolet95mellintransforms}).


\end{document}